\newcommand{\FF}{\mathbb{F}}
\newcommand{\NN}{\mathbb{N}}
\newcommand{\RR}{\mathbb{R}}
\newcommand{\ZZ}{\mathbb{Z}}
\newcommand{\cQ}{\mathcal{Q}}
\newcommand{\set}[1]{\left\{ #1 \right\}}
\newcommand{\zone}{\set{0,1}}
\renewcommand{\epsilon}{\varepsilon}
\renewcommand{\deg}{\text{deg}}
\newtheorem{theorem}{Theorem}
\newtheorem{corollary}[theorem]{Corollary}
\newtheorem{remark}[theorem]{Remark}
\newtheorem{defi}[theorem]{Definition}
\newtheorem{proposition}[theorem]{Proposition}
\newcommand{\remove}[1]{}
\newcommand{\ic}{\mathrm{ic}}
\newcommand{\w}{\mathrm{wt}}
\newcommand{\spc}{\mathrm{spc}}
\begin{document}

\title{
About almost covering subsets of the hypercube
}

\author{
Arijit Ghosh \footnote{Indian Statistical Institute, Kolkata, India}
\and
Chandrima Kayal \footnote{The Institute of Mathematical Science, Chennai, India}
\and 
Soumi Nandi 
\footnote{Indian Institute of Science, Bengaluru India}
}

\maketitle

\begin{abstract}
Let $\FF$ be a field and consider the hypercube $\{0,1\}^{n}$ in $\FF^{n}$. Sziklai and Weiner (Journal of Combinatorial Theory, Series A 2022) showed that if a polynomial $P(X_{1}, \dots, X_{n}) \in \FF [X_{1}, \dots, X_{n}]$ vanishes on every point of the hypercube $\{0,1\}^{n}$ except those with at most $r$ many ones then the degree of the polynomial will be at least $n-r$. This is a generalization of Alon and F\"uredi's fundamental result (European Journal of Combinatorics 1993) about polynomials vanishing on every point of the hypercube except at the origin (point with all zero coordinates). 
Sziklai and Weiner proved their result using M\"{o}bius inversion formula and the Zeilberger method for proving binomial equalities. In this short note, we show that a stronger version of Sziklai and Weiner's result can be derived directly from Alon and F\"uredi's result. We also prove a multiplicity version of our results when $\FF = \RR$.
\end{abstract}

\section{Introduction}

    Let $\FF$ be a field (finite or infinite) and  $\zone^{n}$ is the $n$ dimensional hypercube embedded in $\FF^{n}$. Also, let $\FF [ X_1, \dots, X_n]$ denotes the polynomial ring over the field $\FF$.
     Alon and F\"{u}redi,  proved the following influential result about the degrees of polynomials vanishing on the subsets of the hypercube using Combinatorial Nullstellensatz Theorem~\cite{AT92,Alon99}. Over the years both Combinatorial Nullstellensatz Theorem, and Alon and Furedi’s result have found multiple extensions and applications. The formal statement of the result is as follows:
     
\begin{theorem}[Alon and F\"{u}redi~\cite{AF93}]
   Suppose $P(X_{1}, \dots, X_{n})$ is a polynomial in $\FF \left[ X_{1}, \dots, X_{n} \right]$ such that  $P(u)=0$ for all $u\in\zone^n\setminus\{(0,\dots,0)\}$ and $P(0,\dots,0)\neq 0$. Then $deg(P) \geq n$. 
   \label{th: AF}
\end{theorem}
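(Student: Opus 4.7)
The plan is to derive Theorem~\ref{th: AF} directly from Alon's Combinatorial Nullstellensatz. The idea is to correct $P$ by subtracting a scalar multiple of the indicator polynomial of the origin so that the corrected polynomial vanishes on the entire hypercube; then the assumption $\deg(P) < n$ would force this corrected polynomial to have a nonzero coefficient on the top multilinear monomial $X_1 X_2 \cdots X_n$, which Combinatorial Nullstellensatz forbids.

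Concretely, I would set $c := P(0,\dots,0) \in \FF \setminus \{0\}$ and define
\[
R(X_1, \dots, X_n) := P(X_1, \dots, X_n) - c \prod_{i=1}^{n} (1 - X_i).
\]
The first step is to verify that $R$ vanishes on every point of $\zone^{n}$: at the origin, both terms equal $c$ and cancel; at every other point $u \in \zone^{n} \setminus \{(0,\dots,0)\}$, the first term vanishes by hypothesis on $P$, and $\prod_{i}(1 - u_i) = 0$ because at least one coordinate of $u$ equals $1$.

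Next, I would argue for contradiction by assuming $\deg(P) \leq n - 1$. The expanded form of $c\prod_{i=1}^{n}(1 - X_i)$ contributes the monomial $(-1)^{n} c \, X_1 X_2 \cdots X_n$, and this contribution cannot be cancelled in $R$ by any term coming from $P$, since every term of $P$ has degree at most $n-1$. Therefore the coefficient of $X_1 X_2 \cdots X_n$ in $R$ is $(-1)^{n+1} c \neq 0$, and in particular $\deg(R) = n$.

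Finally I would invoke the Combinatorial Nullstellensatz with grids $S_i = \{0, 1\}$ and exponents $d_i = 1$ for each $i \in [n]$: we have $\deg(R) = n = \sum_i d_i$, each $d_i < |S_i| = 2$, and the coefficient of $\prod_i X_i^{d_i} = X_1 X_2 \cdots X_n$ in $R$ is nonzero, so Combinatorial Nullstellensatz supplies a point of $\zone^{n}$ on which $R$ does not vanish, contradicting the first step. There is no real obstacle in this plan once the subtraction trick is set up; the one point worth double-checking is that the coefficient of the monomial $X_1 \cdots X_n$ in $R$ genuinely survives, which is automatic because any cancelling contribution from $P$ would already force $\deg(P) \geq n$.
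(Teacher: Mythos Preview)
Your proof is correct. The paper does not supply its own proof of Theorem~\ref{th: AF}; it merely cites Alon and F\"{u}redi~\cite{AF93} and remarks that the result was obtained via the Combinatorial Nullstellensatz, which is precisely the route you have carried out in detail.
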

\noindent

This result has inspired various generalizations and found applications in multiple areas. In this work, we will focus on one such generalization of Theorem~\ref{th: AF} over the field $\FF$. Specifically, for any subset $V \subset \zone^n$ what is the minimum degree of the polynomial $P(X_1, \dots, X_n) \in \FF [X_1, \dots, X_n]$ that vanishes over all points in $V$ and is non-zero at least at one point  $u \in \zone^n \setminus V$? 

For any $u=(u_1,\dots,u_n)\in\zone^n$, by \emph{weight} of $u$ we mean the $L_1$-norm $\|u\|_{1}$ of $u$, that is, the number of $1$'s in the coordinates of $u$. 
Recently, using {\em M\"{o}bius inversion formula}~\cite[Chapter~3]{Stanley_2011} and Zeilberger's algorithm for proving binomial coefficient identities~\cite{PauleS95}, Sziklai and Weiner proved an interesting extension of Alon and F\"{u}redi's result (Theorem~\ref{th: AF}). 
\begin{theorem}[Sziklai and Weiner~\cite{SziklaiW23}]
    Suppose $P(X_{1}, \dots, X_{n})$ is a polynomial in $\FF[X_1,\dots,X_n]$ such that $P(u) \neq 0$ for all $u \in \zone^n$ with $\| u \|_{1} \leq r$, and $P(v) = 0$
    for all $v\in\zone^n$ with $\|v \|_{1} >r$. Then $\deg(P)\geq n-r$.
    \label{SW23}
\end{theorem}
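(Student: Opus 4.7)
The plan is to prove the following stronger statement, which immediately implies Theorem~\ref{SW23}: if $P \in \FF[X_1, \dots, X_n]$ satisfies $P(v) = 0$ for every $v \in \zone^n$ with $\w(v) > r$ and $P(u^*) \neq 0$ for \emph{some} $u^* \in \zone^n$ with $\w(u^*) \leq r$, then $\deg(P) \geq n - r$. This weakens Sziklai and Weiner's hypothesis, which demands non-vanishing at \emph{every} weight-at-most-$r$ point of the cube.

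First I would reduce to the case in which the chosen nonzero point has weight exactly $r$. Let $r^*$ denote the largest weight in $\zone^n$ at which $P$ takes a nonzero value; by the new hypothesis $r^* \leq r$, and by the definition of $r^*$ the polynomial $P$ vanishes on every point of weight $> r^*$. If I can establish $\deg(P) \geq n - r^*$ whenever the witness has weight exactly $r^*$, then applying that conclusion to the pair $(P, r^*)$ yields $\deg(P) \geq n - r^* \geq n - r$. So I may assume there is $u^* \in \zone^n$ with $\w(u^*) = r$ and $P(u^*) \neq 0$, and by permuting coordinates that $u^* = (\underbrace{1, \dots, 1}_{r}, \underbrace{0, \dots, 0}_{n - r})$.

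Next I would restrict $P$ to the subcube through $u^*$ by setting $Q(Y_1, \dots, Y_{n - r}) := P(1, \dots, 1, Y_1, \dots, Y_{n - r})$; clearly $\deg(Q) \leq \deg(P)$. On the one hand, $Q(0, \dots, 0) = P(u^*) \neq 0$. On the other hand, for any nonzero $v \in \zone^{n - r}$, the extended point $(1, \dots, 1, v) \in \zone^n$ has weight $r + \w(v) > r$, so $P$ vanishes there and hence $Q(v) = 0$. Thus $Q$ satisfies exactly the hypothesis of Alon and F\"uredi's theorem on the hypercube $\zone^{n - r}$, and applying Theorem~\ref{th: AF} yields $\deg(Q) \geq n - r$, whence $\deg(P) \geq n - r$. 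The approach has no real obstacle: the essential observation is simply that slicing the cube through a weight-$r$ point where $P$ is nonzero converts the ``almost-covering'' condition into an exact Alon--F\"uredi instance, and the only subtlety is the initial reduction from weight $\leq r$ to weight exactly $r$, which follows by the trivial monotonicity in the parameter $r$.
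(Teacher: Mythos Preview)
Your proposal is correct and takes essentially the same approach as the paper: both arguments pick a point of extremal weight where $P$ is nonzero, restrict $P$ to the complementary subcube through that point, and apply Alon--F\"uredi (Theorem~\ref{th: AF}). The paper packages this inside the more general Theorem~\ref{thm-new-lower-bound} and uses the substitution $X_i \mapsto 1-X_i$ to reduce to the minimum-weight case, whereas you work directly with the maximum-weight nonzero point and thereby avoid that substitution---but unwinding the paper's proof in the Sziklai--Weiner setting yields exactly your restriction $Q(Y_1,\dots,Y_{n-r^*}) = P(1,\dots,1,Y_1,\dots,Y_{n-r^*})$.
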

\noindent
Using Gr\"{o}bner basis theory, Heged\"{u}s gave a further generalization of Theorem~\ref{SW23}.

\begin{theorem}[Heged\"{u}s~\cite{hegedus2024coversfinitesetspoints}]
Suppose $P(X_{1},\dots,X_{n})$ is a polynomial in $\FF[X_1,\dots,X_n]$ such that $P(u)=0$ for all $u\in\zone^n$ with $\| u \|_{1} \leq k$, and there exists $v\in\zone^n$ such that $\| v \|_{1} > k$ and $P(v)\neq 0$. Then $\deg(P)>k$.
\label{thm-Hegedus}
\end{theorem}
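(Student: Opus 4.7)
The plan is to derive Theorem~\ref{thm-Hegedus} from Alon and F\"uredi's original result (Theorem~\ref{th: AF}) through two degree-nonincreasing reductions: first restrict $P$ to a coordinate subcube on which it vanishes everywhere except at the all-ones vertex, then apply the affine flip $X_i \mapsto 1-Y_i$ to translate that vertex to the origin.

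I would argue by contradiction, assuming $\deg(P) \leq k$. Since the hypothesis on $v$ guarantees $P$ is not identically zero on $\zone^n$, I would pick $w^\star \in \zone^n$ of minimum weight with $P(w^\star) \neq 0$; because $P$ vanishes on every point of weight at most $k$, we automatically get $m := \w(w^\star) > k$. Setting $T := \supp(w^\star)$, the minimality of $w^\star$ forces $P(u) = 0$ for every $u \in \zone^n$ with $\supp(u) \subsetneq T$, since any such $u$ satisfies $\w(u) < m$.

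Next I would restrict $P$ by setting $X_j = 0$ for every $j \notin T$. This produces a polynomial $R$ in the $|T| = m$ variables $\{X_i : i \in T\}$ with $\deg(R) \leq k$, vanishing on $\zone^T \setminus \{\mathbf{1}_T\}$ and nonzero at $\mathbf{1}_T$. The substitution $X_i \mapsto 1 - Y_i$ then yields $R'(Y_1,\dots,Y_m) := R(1-Y_1,\dots,1-Y_m)$, still of degree at most $k$, that vanishes on $\zone^m \setminus \{\mathbf{0}\}$ and is nonzero at the origin. Theorem~\ref{th: AF} applied to $R'$ on the $m$-dimensional hypercube forces $\deg(R') \geq m$, and so $k \geq \deg(R') \geq m > k$, a contradiction.

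I do not anticipate a genuine obstacle. The only items worth checking carefully are that coordinate restriction and the affine flip are both degree-nonincreasing (routine) and that the vanishing pattern carved out on the subcube $\zone^T$ matches the Alon--F\"uredi hypothesis exactly (immediate from the minimality of $w^\star$). The conceptual crux is the minimum-weight choice of $w^\star$: it is precisely the device that isolates a single nonvanishing vertex on a subcube of dimension strictly greater than $k$, and thus reduces Heged\"us's statement directly to Theorem~\ref{th: AF}.
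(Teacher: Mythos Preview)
Your proposal is correct and follows essentially the same route as the paper: the paper derives Theorem~\ref{thm-Hegedus} as an immediate corollary of its main Theorem~\ref{thm-new-lower-bound}, and the proof of Theorem~\ref{thm-new-lower-bound} is exactly your argument---select a nonvanishing point of minimum weight $w$, restrict to the subcube indexed by its support (setting the remaining coordinates to $0$), apply the flip $X_i\mapsto 1-X_i$ to move the all-ones vertex to the origin, and invoke Theorem~\ref{th: AF}. The only cosmetic differences are that the paper frames the argument directly rather than by contradiction and combines the restriction and flip into a single definition $Q(X_1,\dots,X_w):=P(1-X_1,\dots,1-X_w,0,\dots,0)$.
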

\noindent
To see why Theorem~\ref{thm-Hegedus} is a generalization of Theorem~\ref{SW23}, suppose $R$ is a polynomial in $\FF[X_1,\dots,X_n]$ such that for any $u\in\zone^n,\;R(u)=0$ if and only if $\| u \|_{1} > r$.
Now consider the polynomial $P(X_1,\dots,X_n)=R(1-X_1,\dots,1-X_n)$. \remove{Here $-1$ is the additive inverse of $1$ in the field $\FF$.} Observe that $\deg(P)=\deg(R)$ and $P(u)=0$ except for all $u\in\zone^n$ with $\| u \|_{1} > n-r-1$. Therefore, from Theorem~\ref{thm-Hegedus}, we have $\deg(P)\geq n-r$.

The main objective of this short note is to show that 
the following generalization of Theorem~\ref{SW23} and Theorem~\ref{thm-Hegedus} directly follows from Alon and F\"{u}redi's result (Theorem~\ref{th: AF}). 

\begin{theorem}[Main result]
    Let $S$ be a proper subset of $\zone^{n}$. 
    Suppose $P\in\FF[X_1,\dots,X_n]$ be a polynomial with $P(u)=0$ for all $u \in S$ and there exists a $v\in \zone^{n} \setminus S$ with $P(v) \neq 0$. Then, 
    $$
        \deg(P)\geq \max\{w, n-W\},
    $$ 
    where $w = \min \left\{ \| u \|_{1}  :  P(u)\neq 0 \right\}$ and $W = \max \left\{ \| u \|_{1} : P(u)\neq 0 \right\}$.
    \label{thm-new-lower-bound}
\end{theorem}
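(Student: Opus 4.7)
The plan is to derive both halves of the bound directly from Alon and F\"uredi's result (Theorem~\ref{th: AF}) by restricting $P$ to a well-chosen sub-cube. The starting observation is that $P$ actually vanishes on a larger set than $S$: every $u\in\zone^{n}$ with $\w(u)<w$ satisfies $P(u)=0$, since such a $u$ either belongs to $S$ (and thus is a zero of $P$ by hypothesis) or would contradict the minimality of $w$. Dually, $P$ vanishes at every $u\in\zone^{n}$ with $\w(u)>W$.

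For the bound $\deg(P)\geq n-W$, I will pick a witness $v^{*}\in\zone^{n}\setminus S$ of weight $W$ with $P(v^{*})\neq 0$ and substitute $X_{i}=1$ for each coordinate $i$ with $v^{*}_{i}=1$. This produces a polynomial $P_{1}$ in the $n-W$ remaining variables, whose degree is at most $\deg(P)$. A short weight check shows that evaluating $P_{1}$ at $y\in\zone^{n-W}$ amounts to evaluating $P$ at a hypercube point of weight $W+\w(y)$; hence $P_{1}$ vanishes on $\zone^{n-W}$ except at the origin, where it takes the value $P(v^{*})\neq 0$. Theorem~\ref{th: AF} then yields $\deg(P_{1})\geq n-W$, and this half follows.

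For $\deg(P)\geq w$, I dualize: pick a witness $v^{*}$ achieving the minimum weight $w$ and substitute $X_{i}=0$ for every $i$ with $v^{*}_{i}=0$. The resulting polynomial $P_{2}$ in $w$ variables has degree at most $\deg(P)$, and by the same weight bookkeeping it vanishes on $\zone^{w}$ except at the all-ones point. The affine change of variables $Y_{i}=1-X_{i}$ preserves the degree and moves the unique non-vanishing point to the origin, after which Theorem~\ref{th: AF} gives $\deg(P_{2})\geq w$, completing the proof.

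I do not expect any serious obstacle: the only content is the realization that a witness of extremal weight carves out a sub-cube on which the restricted polynomial has exactly one non-zero, exactly matching the hypothesis of Alon and F\"uredi; the weight computations inside each substitution are routine, and degree-monotonicity under both the coordinate substitutions and the affine map $X\mapsto 1-X$ is immediate.
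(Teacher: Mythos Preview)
Your proposal is correct and follows essentially the same approach as the paper: pick a witness of extremal weight, restrict $P$ to the sub-cube determined by fixing the coordinates of that witness, observe that the restricted polynomial has a single non-vanishing point on the sub-cube, and invoke Theorem~\ref{th: AF}. The only cosmetic difference is that the paper handles just one of the two bounds directly (the $\deg(P)\geq w$ half, via $Q(X_1,\dots,X_w)=P(1-X_1,\dots,1-X_w,0,\dots,0)$) and disposes of the other half by the global substitution $X_i\mapsto 1-X_i$, whereas you treat the two halves separately and symmetrically; the underlying argument is identical.
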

\noindent
Observe that Theorem~\ref{thm-Hegedus} is a simple corollary of the above result, and we have already seen that Theorem~\ref{thm-Hegedus} is a 
generalization of Theorem~\ref{SW23}. Moreover, several recent results~\cite{GKN23,Venkitesh22, GKNV23} about 
degrees of real polynomials vanishings on subsets of $\{0,1\}^{n} \subseteq \RR^{n}$
can be obtained as corollaries of Theorem~\ref{thm-new-lower-bound}. Note that in circuit complexity 
one studies the degrees of polynomials in $\mathbb{Z}_{N}[X_{1}, \dots, X_{n}]$, where $\mathbb{Z}_{N} :=  \mathbb{Z}/N\mathbb{Z}$ and $N \in \NN$, that vanishes on specific subsets of the hypercube 
$\{0,1\}^{n}$ to prove  
explicit lower bounds for the algebraic degree of Boolean functions like OR~\cite{BBR94} and Majority~\cite{Szegedy89}, etc. In contrast, our result holds for any subset of $\zone^n$ considered to be the zero set of the polynomial. Furthermore, compared to the previous results~\cite{GKNV23}, our result also applies to non-symmetric cases. That is, we do not require the vanishing set of the polynomials to be symmetric. In this sense, Theorem~\ref{thm-new-lower-bound} can break the symmetry. 
Additionally when $\FF = \RR$, we can prove a multiplicity version of Theorem~\ref{thm-new-lower-bound}, see Theorem~\ref{th: mult_R}.


\section{Definitions and notations}\label{sec: def_not}

We will use the following definitions and notations in this paper. 
\begin{itemize}

    \item 
        We will denote by $\RR$ the set of all real numbers, $\NN$ the set of all natural numbers, and $\mathbb{Z}$ the set of all integers.

    \item 
        Given a set $S$, $\binom{S}{k}$ denotes the set of all $k$ sized subsets of $S$.
    
    \item 
        For any $n \in \NN$, $[n]$ denotes the set $\left\{ 1, \dots, n\right\}$.

    \item 
        Given $n \in \NN$, $\mathbb{Z}_{n}$ denotes the ring $\mathbb{Z}/{\left( n\mathbb{Z} \right)}$, and $\FF_{2}$ denotes the field with two elements. 

    \item 
        $\mathcal{R}[X_{1}, \dots, X_{n}]$ denotes the polynomial ring over the ring $\mathcal{R}$. 

    \item 
        $\mathcal{O}$ denotes the point in $ (0, \dots, 0)\in \left\{ 0,1 \right\}^{n}$, that is, all the coordinates of $\mathcal{O}$ are set to $0$.

    \item 
        Given a polynomial $P(X_{1}, \dots, X_{n})$ over the polynomial ring $\mathcal{R}[X_{1}, \dots, X_{n}]$, $deg(P)$ denotes the degree of the polynomial $P(X_{1}, \dots, X_{n})$.
    
    \item 
        Given $u \in \{ 0,1\}^{n}$, {\em weight} of $u$ denotes the $L_{1}$-norm $\|u\|_{1}$ of $u$, that is, number of $1$'s in the coordinates of $u$.

\end{itemize}

\section{Covering with polynomials from $\FF[X_1, \dots, X_n]$}

We now proceed to prove Theorem~\ref{thm-new-lower-bound}.

\begin{proof}[Proof of Theorem~\ref{thm-new-lower-bound}]
We can assume that $w \geq 1$, as $w = 0$ case directly follows from Theorem~\ref{th: AF}.
Without loss of generality we can also assume that $w\geq n-W$, otherwise, we work with the polynomial $\widetilde{P}(X_{1}, \dots, X_{n}) := P(1-X_{1}, \dots, 1-X_{n})$ in place of the polynomial $P$.
Let $u = (u_{1}, \dots, u_{n}) \in \zone^{n} \setminus S$ such that $\|u\|_{1} = w$ and $P(u)\neq 0$. Then, for all $v\in\zone^n$ with $\|v\|_{1} < w$, we have $P(v)=0$. Without loss of generality we may assume that $u_i=1$ if and only if $i\in[w]$. Consider the polynomial 
$$
    Q(X_1,\dots,X_w) : = P(1-X_1,\dots, 1-X_w,0,\dots,0). 
$$
Then $\deg(Q)\leq\deg(P)$ and $Q(0,\dots,0)=P(u)\neq 0$. 
Now take any $y\in\zone^w\setminus\{(0,\dots,0)\}$. Then $Q(y)=P(\Tilde{y})$, where $\Tilde{y}_{i}=1-y_i$ for all $i\in [w]$ and $\Tilde{y}_{i}=0$ for all $i\in [n]\setminus [w]$. Since $\w(\Tilde{y})<w$, we have $P(\Tilde{y})=0$. So the polynomial $Q(X_{1}, \dots, X_{w})$ vanishes everywhere on $\zone^w$, except at $(0,\dots,0)\in\zone^w$. Therefore by Theorem~\ref{th: AF}, $\deg(Q)\geq w$. Hence, $\deg(P)\geq w$.
\end{proof}


\begin{corollary}
    Suppose $P\in\FF_2[X_1,\dots,X_n]$ such that $P$ vanishes at $v\in\zone^n$ if and only if $\|w\|_{1}=k$.
    Then $\deg(P)\geq\max\{k,n-k\}$.
\end{corollary}

\begin{proof}
    Let $Q\in\FF_2[X_1,\dots,X_n]$ is defined by $Q(X_1,\dots,X_n)=1-P(X_1,\dots,X_n)$. Then $\deg(Q)=\deg(P)$ and $Q$ vanishes at each $u\in\zone^n$ except when $\w(u)=k$.
    Then by Theorem~\ref{thm-new-lower-bound}, $\deg(Q)\geq\max\{k,n-k\}$.  
\end{proof}

\begin{remark}
    The above result is tight.
\end{remark}

The following proposition establishes the

\begin{proposition}
    Let $q\in \NN$ be a prime, and $0\leq k \leq n$ satisfy the following: 
    \begin{itemize}
        \item $q^{m-1}\leq n-k<q^m$ and
        \item $k+1$ is divisible by $q^m$.
    \end{itemize}
    If $P(X_{1}, \dots, X_{n}) \in\FF_q[X_1,\dots,X_n]$ is a polynomial that vanishes at each $u\in\zone^n$ except when $\|u\|_{1}=k$ then $\deg(P)=\max\{k,n-k\}$.
\label{prop: tightness}\end{proposition}

We need the following result by Kummer~\cite{Kummer1852} to prove Proposition~\ref{prop: tightness}.

\begin{theorem}[Kummer~\cite{Kummer1852}]
    Let $j$ and $m$ be integers such that $0\leq j\leq m$. Let $\alpha\in\NN$ and $p$ be a prime. Then $p^\alpha$ divides $\binom{m}{j}$ if and only if $\alpha$ carries are needed when adding $j$ and $m-j$ in base $p$.
\label{kummer}\end{theorem}

\begin{proof}[Proof of Proposition~\ref{prop: tightness}]
    Without loss of generality we may assume that $k\geq\frac{n}{2}$, otherwise we will work on the polynomial $\Tilde{P}(X_1,\dots,X_n)=P(1-X_1,\dots,1-X_n)$.
    We shall show that there exists $P_{k}\in\FF_q[X_1,\dots,x_n]$ with $\deg(P_k)=k$ that vanishes at each $v\in\zone^n$ except when $\|u\|_{1} = k$. We define 
    $$
        P_{k}(X_1,\dots, X_n) := \sum_{I\in\binom{[n]}{k}}\left(\prod_{i\in I}X_i \right).
    $$
    Then $P_{k}\in\FF_q[X_1,\dots,X_n]$ satisfies the following:
    \begin{itemize}
        \item 
            $\deg(P_{k})=k$, 
        
        \item 
            $P_{k}(\Tilde{u})=0$, for all $\Tilde{u}\in\zone^n$ with $\| \Tilde{u} \|_{1} < k$, 
        
        \item 
            $P_{k}(u)=1$, for all $u\in\zone^n$ with $\| u \|_{1} = k$ and 

        \item 
            For all $u_r\in\zone^n$ with $\| u_{r} \|_{1} = k+r$, where $1\leq r\leq n-k$, we have
            $$
                P_{k}(u_r) \equiv \binom{k+r}{k}  \mod q.
            $$

%
    \end{itemize}

    We claim that for all $r\in [n-k]$, we have
    $$
        \binom{k+r}{k} \equiv 0 \mod q.
    $$
    By our assumption, $k+1$ is divisible by $q^m$. So in the expansion of $k+1$ with powers of $q$, coefficient of $q^i$ is $0$, for all $i<m$. So in the expansion of $k$ with powers of $q$, coefficient of $q^i$ is $q-1$, for all $i<m$. Now for each $r\in [n-k]\subseteq [q^m-1]$, there exists $j\in\{0,1,\dots,m-1\}$ such that the coefficient of $q^j$ in the expansion of $r$ with powers of $q$ is non-zero. So at least one carry is needed when adding $k$ with $r$ in base $q$. So by Theorem~\ref{kummer}, $q$ divides $\binom{k+r}{k}$, as required.
\end{proof}  

Barrington, Beigel, and Rudich~\cite{BBR94} showed that if $P(X_{1}, \dots, X_{n}) \in \mathbb{Z}_{m}[X_{1}, \dots, X_{n}]$ is a polynomial with minimum degree such that $P$ vanishes at $\mathcal{O}$ and does not vanishes at any other point of $\{0,1\}^{n}\setminus \{\mathcal{O}\}$ then $deg(P) = \Theta(n^{{1}/{r}})$, where
$r$ is the number of distinct primes dividing $m$.
Note that, Proposition~\ref{prop: tightness}, is a generalization of their result in the sense that the zero set of the polynomial is any $k$-th layer such that $k$ satisfies the assumptions and not necessarily $k=0$.

\remove{




    
     

    \ 
    \
    

}

\remove{
\section{Further generalizations of Alon and F\"{u}redi's results}

\color{blue}

\begin{defi}[Separation Complexity]\label{defi: index complexity}
Let $S$ be a proper subset of the hypercube $\zone^{n}$ and $u \in \zone \setminus S$. We {\em separation complexity} $\spc(S,u)$ of $u$ with respect to $S$ is defined to be the smallest positive integer such that the following holds: there exists $I \subseteq [n]$ with $|I| = \spc(S,u)$ such that for each $s \in S$, $s_{i}\neq u_{i}$ for some $i$ in $I$. {\em Separation complexity} $\spc(S)$ of $S$ is defined as 
$$
    \spc(S) : = \min_{u \in \zone^{n} \setminus S} \spc(S,u). 
$$
\end{defi}

\begin{theorem}[Main result]
    Let $S$ be a proper subset of $\zone^{n}$. 
    Suppose $P\in\FF[X_1,\dots,X_n]$ be a polynomial with $P(a)=0$ for all $a \in S$ and there exists a $q\in \zone^{n} \setminus S$ with $P(q) \neq 0$. Then 
    $$
        \deg(P)\geq \min_{a \in \zone^{n}\setminus S} \w(a).
    $$
    \label{thm-new-lower-bound}
\end{theorem}

\color{black}

We will first begin by recalling the definition of index complexity of a set~\cite{GKN23,GKNV23}. 

\begin{defi}[Index Complexity~\cite{GKN23}]\label{defi: index complexity}
The index complexity of a subset $S$ (with $|S|>1$) of the $n$ cube $\zone^n$, denoted by $\ic(S)$, is defined to be the smallest positive integer such that the following holds:
$\exists I\subseteq [n]\footnote{For any $n\in\NN,\;[n]:=\{1,\dots,n\}\subseteq\NN$.}\text{ with } |I|=\ic(S)$ and $\exists v=(v_1,\dots,v_n)\in S$ such that for each $s=(s_1,\dots, s_n)\in S\setminus \{v\}$, $s_i\neq v_i$,
for some $i\in I$.
For singleton subset $S$, $\ic(S)$ is defined to be $0$.
\end{defi}

\begin{theorem}
    Let $S\subseteq\zone^n$ such that $|S|>1$ and $s$ is a point in $S$. 
    Suppose $P\in\FF[X_1,\dots,X_n]$ be a polynomial with $P(a)=0$ for all $a\in\zone^n\setminus S$ and $P(s)\neq 0$. Then $\deg(P)\geq n-\ic(S)$.\remove{, where $\ic(S)$ denotes the \emph{index complexity} of $S$.}
    \label{thm-index-complexity-lower-bound}
\end{theorem}

\begin{proof}
Let $I$ be the smallest size subset of $[n]$ such that for all $a \in S \setminus \{s\}$ there exists a $i \in I$ with $s_{i} \neq a_{i}$, where $a = (a_{1}, \dots, a_{n})$ and $s = (s_{1}, \dots, s_{n})$. Without loss of generality, we may assume that  $I=\{n-r+1,\dots,n\}$ and $s_i=1$ if and only if $i\in I$. 
By the definition of index complexity, there exists a $u=(u_{1}, \dots, u_{n}) \in S$ and a set $J \subseteq [n]$ such that $|J| = \ic(S)$ and for all $a \in S\setminus \{u\}$ there exists $j \in J$ with $a_{j} \neq u_{j}$.
Now consider the following polynomial in $\FF[X_{1}, \dots, X_{n}]$
$$
    Q \left( X_{1}, \dots, X_{n} \right) := \left(\prod_{j \in J} \left( X_{j}-u_{j} \right) \right) \times P\left( X_{1},\dots, X_{n-r}, 1, \dots, 1\right)
$$
there exist $s=(s_{1},\dots, s_{n})\in S$ and $I\subseteq [n]$ with $|I|=\ic(S)=r$ (say) such that for all $u=(u_{1},\dots, u_{n})\in S\setminus \{s\}$ there is an $i\in I$ for which $u_i\neq s_i$. Without loss of generality we may assume that  $I=\{n-r+1,\dots,n\}$ and $s_i=1$ if and only if $i\in I$. We define $Q\in\FF[x_1,\dots,x_{n-r}]$ to be the polynomial $Q(x_1,\dots,x_{n-r})=P(x_1,\dots,x_{n-r},1,\dots,1)$, that is $Q$ is the polynomial we get from $P$ by putting $x_i=1$, for all $i\in I$. Then $\deg(Q)\leq\deg(P)$ and $Q(0,\dots,0)=P(s)\neq 0$. Again for any $v=(v_1,\dots,v_{n-r})\in\zone^{n-r}\setminus\{(0,\dots,0)\}$, $Q(v)=P(\Tilde{v})$, where $\Tilde{v}=(v_1,\dots,v_{n-r},1,\dots,1)\in\zone^n$. Then by definition of index complexity, $\Tilde{v}\not\in S$ and so $P(\Tilde{v})= 0$. Hence $Q(v)=0$. So by Theorem~\ref{th: AF}, $\deg(Q)\geq n-r$. Therefore $\deg(P)\geq n-r$.    
\end{proof}

Applying the above theorem, we can derive the following result. 

\begin{corollary}
Suppose $\FF$ is an arbitrary field (finite or infinite) and $\
\zone^n$ is embedded in $\FF^n$. Consider any $S\subseteq\zone^n$ such that $|S|>1$ and let $P\in\FF[X_1,\dots,X_n]$ be a polynomial that vanishes on $S$ except at one point $t \in S$, where $P$ does not vanish at all, that is, $P(t)\neq 0$ and $P(s)= 0$ for all $s\in S\setminus\{t\}$. Then $\deg(P)\geq \ic(S)$.
\label{ac_sym}\end{corollary}

\begin{proof}
    Let $Q\in\FF[X_1,\dots,X_n]$ be a polynomial that vanishes on $\zone^n\setminus S$ and does not vanish on $S$, that is, $Q(u)=0$, for all $u\in\zone^n\setminus S$ and $Q(s)\neq 0$, for all $s\in S$. Using Theorem~\ref{thm-index-complexity-lower-bound}, we have $\deg(Q)\geq n-\ic(S)$. Now consider the polynomial $R = P\times Q$. Observe that  
    $R$ vanishes on $\zone^n\setminus\{t\}$ and $PQ(t)\neq 0$. So by Theorem~\ref{th: AF}, $\deg(PQ)\geq n$ and hence $\deg(P)\geq\ic(S)$.
\end{proof}
Given a set $S\subseteq\zone^n$, we define $\w(S)=\{\w(x)\;|\;x\in S\}$.
We call $S\subseteq\zone^n$ to be \emph{symmetric} if for all $y \in \zone^{n}$ with $\w(y) \in \w(S)$ then $y \in S$.

\begin{proposition}
    Suppose $S$ is a symmetric subset of $\zone^n$ such that $w=\min\w(S)$ and $W=\max\w(S)$. Then $\ic(S)\leq\min\{W, n-w\}$.
\end{proposition}
 
\begin{proof}
    Without loss of generality, let $W\leq n-w$. Let $v=(v_1,\dots,v_n)\in S$ such that $v_i=1$ if and only if $i\in [W]$. Now take any $u=(u_1,\dots,u_n)\in S\setminus\{v\}$. If $\w(u)=W=\w(v)$ then there exists $i\in [W]$ such that $u_i\neq 1=v_i$. Otherwise, $\w(u)<W$ and so there exists $i\in [W]$ such that $u_i=0\neq v_i$. So in any case there exists $i\in [W]$ such that $u_i\neq v_i$. So by definition~\ref{defi: index complexity}, $\ic(S)\leq W$.
\end{proof}

As a direct consequence of Corollary~\ref{cc_sym}\remove{and~\ref{ac_sym}} we get


\begin{corollary}
     Suppose $\FF$ is an arbitrary field (finite or infinite) and $\zone^n$ is embedded in $\FF^n$. Consider any symmetric $S\subseteq\zone^n$ such that $|S|>1$ and let $P\in\FF[X_1,\dots,X_n]$ be a polynomial that vanishes on $\zone^n$ except on $S$, that is, $P(t)=0$, for all $t\in\cQ^n\setminus S$ and $P(s)\neq 0$, for all $s\in S$. Then $\deg(P)\geq \max\{n-W, w\}$.
\label{gen_SW}\end{corollary}

Again this corollary immediately implies the following

\begin{corollary}
    Suppose $\FF$ is an arbitrary field (finite or infinite) and $\zone^n$ is embedded in $\FF^n$. Consider any symmetric $S\subseteq\zone^n$ such that $|S|>1$ and let $H_1,\dots,H_m$ be a family of hyperplanes in $\FF^n$ that cover all the points in $\zone^n\setminus S$ leaving out all the points in $S$. Then $m\geq \max\{n-W, w\}$.
\label{spcl_case}\end{corollary}

\begin{remark}
    If we consider $\FF=\RR$ then the above bound is not tight (for the matching bound, see Theorem~1.20~\cite{GKNV23}). But if we consider $\FF=\ZZ_3$, $n=6$ and $S\subseteq\zone^6$ such that $\w(S)=\{1,4\}$ then we get $\ic(S)\leq\min\{4,5\}=4$ and so by Corollary~\ref{spcl_case}, at least $2$ hyperplanes are required to cover all the points in $\zone^6\setminus S$ keeping all the points in $S$ as uncovered. Observe that, the two hyperplanes $\sum_{i\in [6]}x_i=0$ and $\sum_{i\in [6]}x_i=2$ cover all the points in $\zone^6\setminus S$ leaving out all the points in $S$.
\end{remark}

\begin{remark}
     Corollary~\ref{gen_SW} gives a direct generalization of Theorem~\ref{SW23}.
     
\end{remark}





}

\color{black}
\section{Covering with multiplicities when $\mathbb{F} = \mathbb{R}$}


 Here we shall first study the following more generalized problem: For a given subset $S\subset \{0,1\}^n$ and a natural number $t$, what is the minimum degree of a polynomial with real coefficients that vanishes at each point of $S$ with multiplicity at least $t$ and does not vanish at least at one point of $\cQ^n\setminus S$?
 A polynomial $P(X_1,\dots,X_n)\in\RR[x_1,\dots,x_n]$ is said to have a \emph{zero of multiplicity $t$} at $a\in\RR^n$ if $P$ vanishes at $a$ and all derivatives of $P$ upto order $t-1$ also vanish at $a$.

Given a polynomial $P(X_{1}, \dots, X_{n}) \in \RR[X_{1}, \dots, X_{n}]$ we denote by $w_{t}(P) \in [n]$ the largest number such that for all $u_{1} \in \{0,1\}^{n}$, with $\| u_{1} \|_{1} < w_{t}(P)$, $P$ has a zero of multiplicity $t$ at $u_{1}$, and additionally, there exists a $v_{1} \in \{0,1\}^{n}$ with $\| v_{1} \|_{1} = w_{t}(P)$ such that $P(v_1)\neq 0$. Similarly, we denote by $W_{t}(P) \in [n]$ the smallest number such that for all $u_{2} \in \{0,1\}^{n}$, with $\| u_{2} \|_{1} > W_{t}(P)$, $P$ has a zero of multiplicity $t$ at $u_{2}$, and there exists a $v_{2} \in \{0,1\}^{n}$ with $\|v_{2}\|_{1} = W_{t}(P)$ such that $P(v_2)\neq 0$.

 We will use the following important generalization of Alon and Furedi's result (Theorem~\ref{th: AF}) by Sauermann and Wigderson. 
\begin{theorem}
[Sauermann and Wigderson~\cite{SW20}]\label{prop: Yuval, t,t-1}
Let $t\geq 2$, $n\geq 2t-3$ and $P(X_1,\dots,X_n)\in\RR[x_1,\dots,x_n]$ be a polynomial  having zeros of multiplicity at least $t$ at all points in $\zone^n\setminus\{(0,\dots,0)\}$ and $P(0, \dots, 0)\neq 0$. Then $\deg(P)\geq n+2t-3$ and the bound is tight. 
\end{theorem}
\begin{theorem}
    \label{th: mult_R}
    Let $t\in \NN$ and $S$ be a proper subset of $\zone^{n}$. Suppose $P\in\RR[X_1,\dots,X_n]$ be a non-zero polynomial that vanishes on $S$ with multiplicity at least $t$ and there exists $\xi\in \zone^{n} \setminus S$ with $P(\xi) \neq 0$. 
    If $\max\left\{w_{t}(P), n - W_{t}(P)\right\}\geq 2t-3$ then
    $$
        \deg(P)\geq \max \left\{ w_{t}(P), n - W_{t}(P) \right\}+2t-3.
    $$ 
\end{theorem}

\begin{proof}
    Without loss of generality we assume that $w\geq n-W$, otherwise, we work with the polynomial $\widetilde{P}(X_{1}, \dots, X_{n}) := P(1-X_{1}, \dots, 1-X_{n})$ in place of the polynomial $P$.
Let $u = (u_{1}, \dots, u_{n}) \in \zone^{n} \setminus S$ such that $\| u \|_{1} =w$ and $P(u)\neq 0$. Then, for all $v\in\zone^n$ with $\| v \|_{1} <w$, we have $P(v)=0$ with multiplicity at least $t$. Without loss of generality we may assume that $u_i=1$ if and only if $i\in[w]$. Consider the polynomial $Q(X_1,\dots,X_w) : = P(1-X_1,\dots, 1-X_w,0,\dots,0)$. Then $\deg(Q)\leq\deg(P)$ and $Q(0,\dots,0)=P(u)\neq 0$.

Now we claim that, $Q(X_{1}, \dots, X_{w})$ vanishes everywhere on $\zone^w\setminus\{(0,\dots,0)\}$ with multiplicity at least $t$. Take any $v\in\zone^w\setminus\{(0,\dots,0)\}$ and let $\Tilde{v}\in\zone^n$ such that $\Tilde{v}_i=1-v_i$, for all $i\in [w]$ and $\Tilde{v}_i=0$, for all $i>w$.
Then $\| \Tilde{v} \|_{1} <w$ and so $\Tilde{v}$ is a zero of $P$ with multiplicity at least $t$. Hence $Q(v)=P(\Tilde{v})=0$.
Next consider any differential operator $D=\frac{\partial^{d_1}}{\partial X_1^{d_1}}\dots\frac{\partial^{d_w}}{\partial X_w^{d_w}}$ such that $d_i\geq 0$, for all $i\in [w]$ and $\sum_{i=1}^w d_i< t$. We shall show that, $DQ(v)=0$.

First observe that, we can write $P(X_1,\dots,X_n)=P_1(X_1,\dots,X_w)+P_2(X_1,\dots,X_n)$, where each monomial of $P_2$ involves at least one variable from $X_{w+1},\dots, X_n$ with degree at least $1$. So $P_2$ vanishes at each \remove{$(X_1,\dots,X_w,\underbrace{0,\dots,0}_{(n-w)\text{ times}})\in\RR^n$.}$\Tilde{X}\in\RR^n$ such that $\Tilde{X}_i=0$, for all $i>w$.
Again $D$ involves none of the variables from $X_{w+1},\dots, X_n$, each monomial of $DP_2$ involves at least one variable from $X_{w+1},\dots, X_n$ with degree at least $1$ and so $DP_2$ also vanishes at each $\Tilde{X}\in\RR^n$ such that $\Tilde{X}_i=0$, for all $i>w$. So we can write $DP_1(X_1,\dots,X_w)=DP_1(X_1,\dots,X_w)+DP_2(X_1,\dots, X_w,0,\dots,0)=DP(X_1,\dots, X_w,0,\dots,0)$, for all $(X_1,\dots,X_w)\in\RR^w$.

Next observe that, we can write $Q(X_1,\dots,X_w)=P_1(1-X_1,\dots,1-X_w)$, as $P_2(1-X_1,\dots,1-X_w,0,\dots,0)=0$. So
$$
    DQ(X_1,\dots,X_w)=DP_1(1-X_1,\dots,1-X_w)
    =DP(1-X_1,\dots,1-X_w,0,\dots,0),
$$ 
for all $(X_1,\dots,X_w)\in\RR^w$. Hence $DQ(v)=DP(\Tilde{v})=0$, as $P$ vanishes at $\Tilde{v}$ with multiplicity $t$ implies that $DP(\Tilde{v})=0$. 
%
%
%
Therefore, using Theorem~\ref{prop: Yuval, t,t-1}, we get $\deg(Q)\geq w+2t-3$.
\end{proof}

\begin{remark}
 The above result is tight. There exists a polynomial $P\in\RR[X_1,\dots,X_n]$ with $\deg(P)=w+2t-3$ such that $w\geq 2t-3$ and $P(v)=0$ with multiplicity at least $t$ for all $v\in\zone^n$ with $\|v\|_{1} < w$ and for each $\Tilde{w}\in\{w,\dots,n\}$ there exists $u\in\zone^n$ with $\| u \|_{1} =\Tilde{w}$ such that $P(u)\neq 0$.
\end{remark}

\begin{proof}
By~\ref{SW23}, we know that there exists a polynomial $Q\in\RR[X_1,\dots, X_w]$ of degree $w+2t-3$ that vanishes everywhere on $\zone^w$ with multiplicity at least $t$ except at $(1,\dots,1)\in\zone^w$, where $Q$ does not vanish at all.

 Now define $P(X_1,\dots,X_n)=Q(X_1,\dots,X_w)$. Then for all $v\in\zone^n$ with 
 $\| v \|_{1} < w$, $P$ vanishes at $v$ and for each $\Tilde{w}\in\{w,\dots,n\}$, there exists $u_{\Tilde{w}}\in\zone^n$ such that $u_{\Tilde{w},i}=1$ if and only if $i\in [\Tilde{w}]$ and $P(u_{\Tilde{w}})=Q(\underbrace{1,\dots,1}_{w\text{ times}})\neq 0$. 

 Next consider any differential operator $D=\frac{\partial^{d_1}}{\partial X_1^{d_1}}\dots\frac{\partial^{d_n}}{\partial X_n^{d_n}}$ such that $d_i\geq 0$, for all $i\in [n]$ and $\sum_{i=1}^n d_i< t$. We shall show that, $DP(v)=0$, for all $v\in\zone^n$ with $\| v \|_{1}<w$. First observe that, if $d_i>0$, for some $i> w$, then $DP$ is identically zero. So now we assume that $d_i=0$, for all $i>w$ and take any $v\in\zone^n$ with $\| v \|_{1} < w$. Then $\Tilde{v}=(v_1,\dots,v_w)\in\zone^w\setminus\{(1,\dots,1)\}$ and so $\Tilde{v}$ is a zero of $Q$ with multiplicity at least $t$. Hence $DQ(\Tilde{v})=0$ and so $DP(v)=DQ(\Tilde{v})=0$, as required.  
\end{proof}

\bibliographystyle{alpha}

\bibliography{references}

\end{document}